\documentclass{article}
\usepackage[T1]{fontenc}
\usepackage[utf8]{inputenc}
\usepackage[english]{babel}
\usepackage{amsthm,amssymb,amsmath}
\usepackage{hyperref}

\newtheorem{Theorem}{Theorem}[section]
\newtheorem{Lemma}[Theorem]{Lemma}

\newtheorem{athm}{Theorem}

\theoremstyle{definition}

\newtheorem{question}{Question}

\theoremstyle{remark}
\newtheorem{Note}{Remark}

\newcommand{\Z}{\mathbb{Z}}

\DeclareMathOperator{\Aut}{Aut}

\DeclareMathOperator{\opp}{\mathrm{opp}}
\newcommand{\bigcdot}{\boldsymbol{\cdot}}

\title{On a Cauchy theorem for finite skew braces}

\author{M. Damele \thanks{Dipartimento di Matematica, Università di Cagliari, Via Ospedale 72, 09124 Cagliari, Italy; \texttt{marco.damele@unica.it}; ORCID 0009-0008-3088-5766}
\and V. P\'erez-Calabuig \thanks{Departament de Matem\`atiques, Universitat de Val\`encia, Dr.\ Moliner, 50, 46100 Burjassot, Val\`encia, Spain; \texttt{Vicent.Perez-Calabuig@uv.es}; ORCID 0000-0003-4101-8656.}}

\date{}

\begin{document}
\maketitle

\begin{abstract}
One of the major problems in the structural theory of skew braces consists in the classification of skew braces of finite order up to isomorphism. In this light, the open question of the existence of a Cauchy theorem for finite skew braces is of great interest. We prove a positive answer for the classes of finite two-sided skew braces and bi-skew braces. Consequences and related structural results are also outlined.
\end{abstract}

\emph{Mathematics Subject Classification (2020): 16T25, 81R50, 20E07, 20F36}  

\emph{Keywords:} Skew brace, subbrace, Cauchy theorem, two-sided skew brace, bi-skew brace.

\section{Introduction}

Skew braces are a novel algebraic structure which was introduced in the context of classifying non-degenerate set-theoretical solutions of the Yang-Baxter equation (YBE, for short). A \emph{(left) skew brace} is a set $B$ endowed with two group structures, $(B,+)$ and $(B,\cdot)$ ---not necessarily abelian---, such that the following (left) compatibility condition holds: 
\begin{equation}
\label{eq:left-dist}
a \cdot (b+c) = a \cdot b - a + a \cdot c, \quad \text{for all $a,b,c \in B$,}
\end{equation}
---as usual, products are denoted by juxtaposition and act before sums. One of the major problems in the theory, with strong implications in the classification problem of solutions, is the classification up to isomorphism of skew braces of finite order (see~\cite{AcriBonatto20-algcoll-ab, Acri_2020, AcriBonatto22-jaa-nonab}, for example). The order of a skew brace is defined as the order of any of its component groups. \emph{In this work, every skew brace $B$ is considered finite and we write by $|B|$ its order.}

It is out of question that the structural analysis of every algebraic structure undoubtedly passes through the description of its inner substructures. A subset $S$ of a skew brace $B$ is said to be a \emph{subbrace} if both $(S,+)$ and $(S,\cdot)$ are subgroups of $(B,+)$ and $(B,\cdot)$, respectively.
Bearing in mind that the Cauchy theorem is one of the milestones of the classification of finite groups, the following question consequently arises.

\begin{question} 
\label{que:Cauchy} Let $B$ be a skew brace. Is it true that for every prime divisor $p$ of $|B|$ there exists a subbrace $S$ of $B$ with $|S| = p$?
\end{question}

The problem of studying the subbrace structure of a given skew brace is of great difficulty ---e.g. the cases of the description of skew braces without subbraces in~\cite{ballesterbolinches2024solubleskewleftbraces} or subbraces generated by one element in~\cite{BallesterEstebanKurdachenkoPerezC-25-actions, Rump20}. Therefore, it is reasonable to handle first this question for some worthy classes of skew braces with deep implications in the structural theory. In this sense, it stands out the class of the so-called two-sided skew braces, i.e. skew braces for which the compatibility condition~\eqref{eq:left-dist} holds on both sides. Two-sided skew braces were firstly studied in~\cite{nasybullov2019}, and they generalise Jacobson radical rings (see~\cite{RUMP2007153});
moreover, in~\cite{nasybullov2019} a negative answer is given to question~19.90-c) of the Kourovka notebook~\cite{Kourovka26} for two-sided skew braces. This is equivalent to a positive answer of Byott's conjecture in terms of regular subgroups of the holomorph of a group (see~\cite{byott2015} and~\cite{Trappeniers_2023}), which is one of the most challenging open problems in the theory (see~\cite{Byott24-insoluble, DameleLoi25, Tsang_2019} for some other partial results). Another remarkable class of skew braces is the class of \emph{bi-skew braces}, i.e. skew braces such that interchanging the role of group operations gives rise also to a skew brace structure. Bi-skew braces were introduced in~\cite{childs19} in the context of the Hopf-Galois extension theory where they play a key role.
In~\cite{biSkewbrace} there is also a negative answer to question~19.90-c) for the class of bi-skew braces. 

The main results of this work give a positive answer to Question~\ref{que:Cauchy} for the classes of two-sided skew braces and bi-skew braces.
\begin{athm}
\label{teo:two-sided}
Let $B$ be a two-sided skew brace. For every prime divisor $p$ of $|B|$, there exists a subbrace $S$ of $B$ with $|S| = p$.
\end{athm}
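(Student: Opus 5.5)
The plan is to find a single element $x$ of prime order $p$ whose multiplicative and additive cyclic subgroups coincide; then $S=\langle x\rangle_+=\langle x\rangle_\cdot$ is a subbrace of order $p$. We use the map $\lambda\colon(B,\cdot)\to\Aut(B,+)$, $\lambda_a(b)=-a+ab$, which is a group homomorphism. There are two easy sources of such an $x$.

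\emph{(i) Elements of $\ker\lambda$.} If $x\in\ker\lambda$, then $xb=x+b$ for all $b$. Hence $x^k=kx$ for every $k$, and the two cyclic subgroups generated by $x$ coincide.

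\emph{(ii) Fixed points of the $\lambda$-action.} If $\lambda_a(x)=x$ for all $a\in B$, then $ax=a+x$ for all $a$. Taking $a=x^{k-1}$ and inducting on $k$ gives $x^k=kx$ again.

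So it suffices to produce an element of additive order $p$ that lies either in $\ker\lambda$ or is fixed by $\lambda(B)$.

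We argue by induction on $|B|$. This class is closed under subbraces and quotients by ideals, since both one-sided compatibility conditions pass to them.

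\emph{Case $p\mid|\ker\lambda|$.} By the additive Cauchy theorem, choose $x\in\ker\lambda$ of additive order $p$; since $\ker\lambda$ is a subgroup of $(B,+)$, such an $x$ exists. Then (i) finishes this case.

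\emph{Case $p\nmid|\ker\lambda|$.} First we must show that $K=\ker\lambda$ is an ideal of $B$. It is always a normal subgroup of $(B,\cdot)$ and a subgroup of $(B,+)$. What is missing is normality in $(B,+)$ and $\lambda$-invariance, and here the two-sided hypothesis enters: one rewrites the right distributivity $(b+c)a=ba-a+ca$ in terms of $\lambda$ and the right action $\rho$, and checks that conjugating an element of $K$ additively, or applying $\lambda_a$ to it, preserves the identity $xb=x+b$. Granting this, $B/K$ is a two-sided skew brace of order divisible by $p$ and smaller than $B$. By induction it has a subbrace of order $p$, whose preimage is a subbrace $T\le B$ with $|T|=p|K|$. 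If $T\neq B$, we apply induction to $T$. If $T=B$, then $\lambda(B)\cong B/K$ has order $p$.

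In that last situation, let $X$ be the set of elements of $(B,+)$ of order $p$. By McKay's proof of Cauchy's theorem, $|X|\equiv -1 \pmod p$. The $p$-group $\lambda(B)$ acts on $X$ by additive automorphisms, so the number of fixed points is congruent to $|X|$ modulo $p$. It is therefore nonzero, and a fixed point $x\in X$ gives the subbrace by (ii). The same counting argument also settles directly the case where $\lambda(B)$ is any $p$-group.

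The main obstacle is the ideal property of $\ker\lambda$ and, more generally, making the reduction step legitimate for two-sided skew braces. If $\ker\lambda$ turns out not to be an ideal in general, I would first try replacing it by the largest ideal contained in it, or by the socle $\ker\lambda\cap Z(B,+)$. The socle is always an ideal, and its elements still satisfy (i). The induction would then be rerun with the socle, which requires controlling $p$ in $|B/\mathrm{Soc}(B)|$ using the two-sided structure.
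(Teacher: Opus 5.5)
There is a genuine gap. Your observations (i) and (ii), the case $p\mid|\ker\lambda|$, and the fixed-point count when $\lambda(B)$ is a $p$-group are all correct. The reduction in the case $p\nmid|\ker\lambda|$, however, does not work.

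\textbf{$\ker\lambda$ need not be an ideal.} Let $(B,+)\cong S_3$, with $s$ of order $2$ and $r$ of order $3$. Write each element uniquely as $g=is+jr$ and define $g\cdot h=is+h+jr$. Then $(B,\cdot)\cong C_2\times C_3$ and $\lambda_g(h)=-jr+h+jr$. A direct check of $(g+h)k=gk-k+hk$, splitting according to whether $h\in\langle r\rangle_+$, shows that $B$ is two-sided. Yet $\ker\lambda=\{0,s\}$ is neither normal in $(B,+)$ nor $\lambda$-invariant. So $B/\ker\lambda$ does not exist, and the verification you postpone cannot be carried out.

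\textbf{The fallback does not close the gap.} Let $J$ be the socle, or the largest ideal contained in $\ker\lambda$. Your argument does go through whenever $J\neq 0$. Either $p\mid|J|$ and (i) applies, or you induct on $B/J$. If the resulting preimage is all of $B$, then $\lambda(B)$ is a quotient of $(B/J,\cdot)$, hence a $p$-group, and your counting finishes.

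But the phrase ``$B/K$ is smaller than $B$'' silently assumes $K\neq 0$. Nothing in your proposal handles $J=0$ with $p\nmid|\ker\lambda|$. This already occurs for $p=3$ in the example above. It also occurs for the almost trivial skew brace on $S_3$, where $\ker\lambda=Z(S_3)=0$. In fact your own reduction shows that a minimal counterexample must have $J=0$. So this is exactly the case that must be handled, and it needs a genuinely new use of two-sidedness.

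The paper supplies it as follows.
\begin{enumerate}
\item A minimal ideal $M$ with $p\nmid|M|$ forces $|B|=p|M|$.
\item Then $B^2=(B^{\opp})^2=M$. Since these centralise each other in $(B,+)$, $M$ is an elementary abelian $q$-group. By nilpotency of finite radical rings, $M$ is an abelian skew brace.
\item Next $Z(B,+)=0$. Also $Z(B,\cdot)$ is an ideal inside $M$, hence trivial; Lemma~\ref{SolvSB} excludes the case of abelian $(B,\cdot)$.
\item Finally, take $y$ of multiplicative order $p$. The centraliser $\operatorname{C}_{(B,\bigcdot)}(y)$ is a subbrace by Lemma~\ref{lema:centralsubbrace} and has order divisible by $p$. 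By minimality it equals $B$, contradicting $Z(B,\cdot)=0$.
\end{enumerate}
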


\begin{athm}
\label{teo:bi-skew}
Let $B$ be a bi-skew brace. For every prime divisor $p$ of $|B|$, there exists a subbrace $S$ of $B$ with $|S| = p$.
\end{athm}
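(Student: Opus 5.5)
We would prove Theorem~\ref{teo:bi-skew} by reducing to the existence of one element $b\in B$ with $\lambda_b(b)=b$ whose additive order is divisible by $p$. Here $\lambda_a(x)=-a+ax$, so that $\lambda\colon(B,\cdot)\to\Aut(B,+)$ is a homomorphism. We assume the standard characterisation of bi-skew braces: $B$ is bi-skew if and only if $\lambda$ is also a homomorphism from $(B,+)$, i.e. $\lambda_{a+b}=\lambda_a\lambda_b$. This is due to Childs and Caranti; we have not checked whether the paper states it before this point. The first consequence is
\[
\lambda_a\lambda_b=\lambda_{ab}=\lambda_{a+\lambda_a(b)}=\lambda_a\lambda_{\lambda_a(b)},
\]
hence $\lambda_{\lambda_a(b)}=\lambda_b$ for all $a,b\in B$. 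In words, $\lambda$ is constant on the orbits of the action of $\lambda(B)$ on $B$.

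\textbf{Step 1 (cyclic subbraces).} Suppose $\lambda_b(b)=b$. Then $\lambda_{jb}=\lambda_b^j$ fixes $b$, so $\lambda_{ib}(jb)=jb$. This gives $(ib)(jb)=ib+\lambda_{ib}(jb)=(i+j)b$. Hence on $\langle b\rangle_+$ the two operations coincide, and every additive subgroup of $\langle b\rangle_+$ is a subbrace. In particular, if $p$ divides the additive order $n$ of $b$, then $\langle (n/p)b\rangle_+$ is a subbrace of order $p$.

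\textbf{Step 2 (the case $p\mid |\ker\lambda|$).} Put $K=\ker\lambda$. Since $\lambda$ is a homomorphism on both groups, $K$ is a subgroup of both $(B,+)$ and $(B,\cdot)$, and $ab=a+b$ for $a\in K$. Every $a\in K$ satisfies $\lambda_a(a)=a$. So if $p\mid |K|$, we take $a\in K$ of additive order $p$ by the classical Cauchy theorem and apply Step~1.

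\textbf{Step 3 (the case $p\nmid |K|$).} Since $|B|=|K|\,|\lambda(B)|$, the prime $p$ divides $|\lambda(B)|$. Choose $a\in B$ such that $\lambda_a$ has order $p^k$ with $k\ge 1$: take any $c$ with $p\mid \operatorname{ord}(\lambda_c)$ and replace $c$ by a suitable additive multiple, using $\lambda_{mc}=\lambda_c^m$. The fibre $F=\lambda^{-1}(\lambda_a)=a+K$ has size $|K|$, which is coprime to $p$. By the orbit-constancy identity, $\lambda_a$ maps $F$ into itself, so the $p$-group $\langle\lambda_a\rangle$ permutes $F$. Since $|F|$ is coprime to $p$, some orbit has size $1$, giving $b\in F$ with $\lambda_a(b)=b$. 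Then $\lambda_b=\lambda_a$, so $\lambda_b(b)=b$. If $n$ is the additive order of $b$, then $\lambda_b^n=\lambda_{nb}=\mathrm{id}$, so $p^k\mid n$. Step~1 then gives the required subbrace of order $p$.

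We expect the main obstacle to be Step~3, namely producing an element fixed by its own $\lambda$ with the right $p$-part. The fixed-point counting on the fibre $a+K$ handles it, but it relies on two things: the bi-skew identity $\lambda_{\lambda_a(b)}=\lambda_b$ making the fibres invariant, and the homomorphism property from $(B,+)$ forcing $\operatorname{ord}(\lambda_b)$ to divide the additive order of $b$.
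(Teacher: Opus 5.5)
Your route is genuinely different from the paper's and it works, but the characterisation you start from has the order reversed, and the identity you derive from it is false. In a bi-skew brace $\lambda$ is an \emph{anti}-homomorphism on $(B,+)$: $\lambda_{a+b}=\lambda_b\lambda_a$, which is exactly the paper's $\lambda_{ba}=\lambda_{a+b}$.

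Your version $\lambda_{a+b}=\lambda_a\lambda_b$, combined with the correct one, would force $\lambda(B)$ to be abelian. That fails for the almost trivial skew brace on $S_3$, which is bi-skew, has $\lambda_a(x)=-a+x+a$, and has $\lambda(B)=\mathrm{Inn}(S_3)$. In that example your ``orbit-constancy'' $\lambda_{\lambda_a(b)}=\lambda_b$ says that conjugation by $-a+b+a$ equals conjugation by $b$. This is false for two distinct transpositions $a,b$.

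Redoing your computation with the correct rule gives $\lambda_a\lambda_b=\lambda_{ab}=\lambda_{a+\lambda_a(b)}=\lambda_{\lambda_a(b)}\lambda_a$, that is, $\lambda_{\lambda_a(b)}=\lambda_a\lambda_b\lambda_a^{-1}$. So $\lambda$ is equivariant, not constant on orbits. This is still all Step~3 needs: if $\lambda_b=\lambda_a$, then $\lambda_{\lambda_a(b)}=\lambda_a\lambda_a\lambda_a^{-1}=\lambda_a$, so $\lambda_a$ maps the fibre $F$ into itself.

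Your other uses of the characterisation are unaffected. The identities $\lambda_{mc}=\lambda_c^m$ and $\lambda_{nb}=\lambda_b^n$ involve only commuting factors, and $F=a+K$ follows from $\lambda_{a+k}=\lambda_k\lambda_a=\lambda_a$. Step~2 holds in any skew brace, since $\ker\lambda$ is always a trivial subbrace. With this one-line correction your proof is complete.

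The paper argues by minimal counterexample. It uses Lemma~\ref{simpleBiSkewBrace} to reduce to $|B|=p|M|$ with $M$ a minimal ideal, $p\nmid|M|$ and $M=(B^{\opp})^2=\ker\lambda$. Then $B/M$, of order $p$, acts through $\lambda$ on $\mathrm{Syl}_p(B,+)$, whose size divides $|M|$. A fixed Sylow subgroup, of order $p$, is $\lambda$-invariant and hence a subbrace.

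You avoid induction, ideals and the simplicity lemma entirely. After disposing of the case $p\mid|\ker\lambda|$, you let a $p$-element of $\lambda(B)$ act on a coset of $\ker\lambda$ rather than on Sylow subgroups. You then turn the fixed point $b$ into a subbrace via Lemma~\ref{easyLemma}.

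Both arguments rest on a $p$-group fixing a point of a set of size prime to $p$. Yours is self-contained and uses bi-skewness only through $\lambda_{\lambda_a(b)}=\lambda_a\lambda_b\lambda_a^{-1}$. When $p\nmid|\ker\lambda|$ it also yields a trivial cyclic subbrace of order divisible by $p^k$ for any element of order $p^k$ in $\lambda(B)$. The paper's version has the merit of following the same minimal-counterexample template as its proof of Theorem~\ref{teo:two-sided}.
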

The proof of the two-sided case strongly depends on Lemma~\ref{SolvSB}: Question~\ref{que:Cauchy} holds for soluble skew braces with nilpotent multiplicative group (see Section~\ref{sec:prelim} for the definition of soluble skew braces).

\section{Preliminaries}
\label{sec:prelim}
Let $B$ be a skew brace. Equation~\eqref{eq:left-dist} yields the same identity element in both additive and multiplicative groups. We denote it by~$0$. Then, the multiplicative group acts on the additive group by means of a homomorphism $\lambda\colon a \in (B,\cdot) \mapsto \lambda_a\in \Aut(B,+)$, with $\lambda_a(b) = -a+ab$ for every $a,b\in B$. Moreover, equation~\eqref{eq:left-dist} on the left (resp. on the right) yields
\begin{equation} 
\label{eq:a(-b)}
a(-b) = a -ab +a, \quad (-b)a= a -ba +a, \quad \forall a,b\in B
\end{equation}

A subbrace $I$ of $B$ is called an \emph{ideal}, if $I$ is $\lambda$-invariant, and it is a normal subgroup of both $(B,+)$ and $(B,\cdot)$. Ideals allow to consider quotient skew braces $(B/I,+,\cdot)$, with $a+I = aI$ for every $a\in B$. As in case of rings, a skew brace $B$ is said to be \emph{simple} if it has exactly two ideals: $\{0\}$ and~$B$.

A skew brace $B$ is called \emph{trivial} if the $\lambda$-action is trivial, i.e. $ab = a+b$ for every $a,b\in B$. It turns out that a skew brace of order prime $p$ is a trivial skew brace with equal groups isomorphic to a cyclic group of order~$p$.

Our first lemma is a known result (see~\cite{dededind-skew-brace25}), which gives a sufficient condition for a cyclic subgroup of $(B,+)$ to be a subbrace. We include a proof for the sake of completeness.

\begin{Lemma} \label{easyLemma}
Let $B$ be a skew brace and $x \in B$. If $\lambda_{x}(x) = x$, then $\langle x \rangle_{+}$ is a trivial subbrace of~$B$.
\end{Lemma}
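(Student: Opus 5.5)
The plan is to show that $\lambda_x$ fixes every element of $\langle x\rangle_+$ pointwise. From this the multiplication on $\langle x\rangle_+$ will coincide with the addition, which gives both closure under products and triviality of the induced brace.

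\emph{Step 1: $\lambda_x$ fixes $\langle x\rangle_+$.} Since $\lambda_x\in\Aut(B,+)$ and $\lambda_x(x)=x$, we get $\lambda_x(nx)=n\lambda_x(x)=nx$ for every $n\in\Z$. This also covers negative multiples, since $\lambda_x(-x)=-\lambda_x(x)=-x$. Hence $\lambda_x$ is the identity on $\langle x\rangle_+$.

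\emph{Step 2: powers of $x$ in $(B,\cdot)$.} Since $\lambda$ is a homomorphism from $(B,\cdot)$, we have $\lambda_{x^k}=\lambda_x^k$ for all $k\in\Z$. So every $\lambda_{x^k}$ also fixes $\langle x\rangle_+$ pointwise. We then prove by induction on $k\ge 0$ that $x^k=kx$. The case $k=0$ is $x^0=0$. For the inductive step, $x^{k+1}=x\cdot x^k=x+\lambda_x(x^k)=x+\lambda_x(kx)=x+kx=(k+1)x$, using $ab=a+\lambda_a(b)$. For negative exponents, $x^{-1}$ is the unique $y$ with $x+\lambda_x(y)=0$, i.e.\ $y=\lambda_x^{-1}(-x)=-x$. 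The same induction applied with $x^{-1}=-x$ (and $\lambda_{x^{-1}}=\lambda_x^{-1}$, which also fixes $-x$) gives $x^{-k}=-kx$. Thus $\langle x\rangle_\cdot=\langle x\rangle_+$ as sets.

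\emph{Step 3: conclusion.} For $a=mx$ and $b=nx$ in $S=\langle x\rangle_+$, we have $a=x^m$, so $\lambda_a=\lambda_x^m$ fixes $b$. Hence $ab=a+\lambda_a(b)=a+b\in S$. Therefore $S$ is closed under products and inverses, being equal to the cyclic subgroup $\langle x\rangle_\cdot$. So $S$ is a subbrace, and the $\lambda$-action restricted to $S$ is trivial, i.e.\ $S$ is a trivial subbrace.

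There is no real obstacle here. The only point needing care is the negative exponents in Step 2, which are handled through $\lambda_{x^{-1}}=\lambda_x^{-1}$. Alternatively, one can simply note that a finite subset of the finite group $(B,\cdot)$ closed under multiplication is automatically a subgroup.
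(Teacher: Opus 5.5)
Your proof is correct and follows essentially the same route as the paper: you obtain $x^{-1}=-x$ from the fact that $\lambda_x^{\pm 1}$ fixes $x$, then prove $x^{k}=kx$ and $x^{-k}=-kx$ by induction using $ab=a+\lambda_a(b)$. Your Step~3 only makes explicit the identity $ab=a+b$ on $\langle x\rangle_{+}$, which the paper leaves implicit in $x^{m}x^{n}=x^{m+n}=(m+n)x$.
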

\begin{proof}
Since $\lambda_{x}(x) = x$, we also have that $\lambda_{x^{-1}}(x) = x$. Thus, $0 = x^{-1}x = x^{-1} + \lambda_{x^{-1}}(x) = x^{-1}+x$, i.e. $x^{-1} = -x$. By induction, assume that $x^k = kx$ and $x^{-k} = -kx$ for some $k\geq 1$. Then,
\[ x^{k+1} = xx^{k} = x + \lambda_x(x^k) = x + \lambda_x(kx) = x + k\lambda_x(x) = (k+1)x,\]
and, analogously,
\[x^{-(k+1)} = x^{-1}x^{-k} = x^{-1} + \lambda_{x^{-1}}(x^{-k}) = -x + \lambda_{x^{-1}}(-kx) = -(k+1)x\]
Hence $\langle x \rangle_{+}$ is a trivial subbrace of~$B$.
\end{proof}

Given a skew brace $(B,+,\cdot)$, one can replace the additive group $(B,+)$ by its opposite group, i.e. $(B,+^{\mathrm{opp}})$ with $a+^{\mathrm{opp}}b= b+a$ for every $a,b\in B$, to obtain a new skew brace $B^{\mathrm{opp}} = (B,+^{\mathrm{opp}},\cdot)$. It is called the \emph{opposite skew brace} of~$B$ (see \cite{koch2020oppositeskewleftbraces}). We denote the $\lambda$-map of an element $a \in B^{\mathrm{opp}}$ as~$\lambda^{\mathrm{opp}}_a$, and for every $b\in B$, it holds
\begin{equation}
\label{eq:lambdaopp}
\lambda^{\mathrm{opp}}_a(b) = ab - a = a + \lambda_a(b) - a.
\end{equation}
As a consequence, ideals of $B$ and $B^{\mathrm{opp}}$ coincide. The opposite skew brace $B^{\opp}$ of a trivial skew brace $B$ is also known as an \emph{almost trivial} skew brace, where $ab = b+a$ for every $a,b\in B$.

\begin{Note}
\label{Note:trivialSemitrivial}
If a skew brace $B$ is either trivial or almost trivial, then every subset $H$ of $B$ satisfies: $H \le (B,+)$ if and only if  $H \le (B,\cdot)$. Thus, for every prime $p$ dividing $|B|$, there exists a subbrace $H \le B$ such that $|H| = p$.
\end{Note}

Question~\ref{que:Cauchy} has a positive answer if the additive group of a skew brace is nilpotent.
\begin{Lemma} \label{nilptype}
Let $B$ be a skew brace with nilpotent additive group. Then, for every prime $p$ dividing $|B|$, there exists a subbrace $H \le B$ such that $|H| = p$.
\end{Lemma}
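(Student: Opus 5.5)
We will reduce to a $p$-group situation and then find a nonzero element $x$ of additive order $p$ with $\lambda_x(x)=x$, so that Lemma~\ref{easyLemma} applies.

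\emph{Step 1 (a Sylow subbrace).} Since $(B,+)$ is finite and nilpotent, it is the direct product of its Sylow subgroups. Let $P$ be its Sylow $p$-subgroup. Then $P$ is characteristic in $(B,+)$, so $\lambda_a(P)=P$ for every $a\in B$. For $a,b\in P$ we get $ab=a+\lambda_a(b)\in P$. Hence $P$ is closed under multiplication, and since $B$ is finite, $(P,\cdot)$ is a subgroup of $(B,\cdot)$. Thus $P$ is a subbrace with $|P|=|B|_p>1$.

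\emph{Step 2 (an invariant elementary abelian section).} Let $Z=\Omega_1(Z(P,+))$, the set of elements of additive order dividing $p$ in the centre of $(P,+)$. It is a nontrivial characteristic subgroup of $(P,+)$. Since each $\lambda_a$ with $a\in P$ restricts to an automorphism of $(P,+)$, the subgroup $Z$ is invariant under $\lambda_a$ for all $a\in P$.

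\emph{Step 3 (a fixed point, the key step).} The image $\lambda(P)$ of the $p$-group $(P,\cdot)$ is a $p$-group. It acts on the set $Z\setminus\{0\}$, whose size $|Z|-1$ is not divisible by $p$. Every orbit has $p$-power size, so some orbit is a singleton. This gives $0\neq x\in Z$ with $\lambda_a(x)=x$ for all $a\in P$, and in particular $\lambda_x(x)=x$.

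\emph{Step 4 (conclusion).} By Lemma~\ref{easyLemma}, $\langle x\rangle_+$ is a trivial subbrace of $B$. It has order $p$ because $x\neq 0$ lies in $Z$.

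The main point to get right is Step 3. The fixed point must be taken for the action of $\lambda_x$ with $x$ itself in the invariant set, which is why we restrict to $P$: there the acting group is a $p$-group and the fixed element automatically lies in the acting set. The other steps are routine.
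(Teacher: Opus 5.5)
Your proof is correct. It shares the first step with the paper, but after that it takes a genuinely different route.

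The paper also begins by noting that the Sylow $p$-subgroup $P$ of $(B,+)$ is characteristic and hence a subbrace. It then argues by minimal counterexample: once $|B|$ is a power of $p$, it invokes \cite[Theorem~A]{ballesterbolinches2024solubleskewleftbraces}, which says a skew brace with no proper nonzero subbraces is trivial of prime order. This produces a proper nonzero subbrace, and minimality finishes the argument.

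You replace that external classification result with a direct orbit-counting argument. The $p$-group $\lambda(P)$ acts on $\Omega_1(\operatorname{Z}(P,+))\setminus\{0\}$, a set whose size is prime to $p$, so it has a fixed point $x$. Because $x$ itself lies in $P$, this gives $\lambda_x(x)=x$, and Lemma~\ref{easyLemma} concludes.

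\textbf{What your route buys.} It is self-contained and constructive. It also gives slightly more: the subbrace of order $p$ can be chosen trivial, generated by an element fixed by every $\lambda_a$ with $a\in P$.

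\textbf{What the paper's route buys.} It is shorter once the cited theorem is granted, and it fits the minimal-counterexample pattern used in the rest of the paper.

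A minor simplification: you do not need $\Omega_1(\operatorname{Z}(P,+))$. The common fixed points of $\lambda(P)$ in $P$ form a subgroup of $(P,+)$, since each $\lambda_a$ is an automorphism. Its order is congruent to $|P|\equiv 0 \pmod p$ and it contains $0$, so Cauchy's theorem gives an element of order $p$ in it.
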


\begin{proof}
Let $B$ be a minimal counterexample, and let $p$ be a prime dividing $|B|$, for which there is no subbrace of order~$p$ in~$B$. Let $(P,+)$ be the Sylow $p$-subgroup of $(B,+)$. Since $(P,+)$ is a characteristic subgroup of $(B,+)$, $(P,+)$ is a subbrace of~$B$. By the minimality of $B$, $(P,+)$ cannot be strictly contained in $(B,+)$. Thus, $B$ has order a power of~$p$. 

If $B$ does not contain any proper subbrace, then by \cite[Theorem~A]{ballesterbolinches2024solubleskewleftbraces}, 
$B$ must be the trivial brace of order~$p$, a contradiction. Hence, there exists a proper subbrace $H \subsetneq B$, and by the minimality of $B$, we arrive to a final contradiction.
\end{proof}

The so-called \emph{star product} of a skew brace $B$ can be seen as a measure of the difference between products and sums in~$B$: $a \ast b = -a + ab -b = \lambda_a(b) - b$, for every $a,b\in B$. It follows that $I$ is an ideal of $B$ if, and only if, $B\ast I, I \ast B \subseteq B$. The following easy to check properties are satisfied for every $a,b,c\in B$:
\begin{align}
a \ast(b+c) & = a\ast b + b + a \ast c - b \label{eq:dist-sum}\\
a \ast (-b) & =-b-a\ast b+b \label{eq:a*(-b)}\\
\lambda_a(b\ast c) & = \lambda_{aba^{-1}}(\lambda_a(c)) - \lambda_a(c) = (aba^{-1})\ast \lambda_a(c)\label{eq:ast-prod}
\end{align}
Given subsets $X, Y$ of a skew brace~$B$, $X \ast Y$ is defined as the additively generated subgroup $X \ast Y:= \langle x \ast y \mid x\in X, y \in Y\rangle_+$. If $I$ is an ideal of $B$, then clearly $(a+I)\ast (b+I) = (a\ast b)+I$ for every $a,b\in B$. It follows that $B$ is trivial if, and only if, $B\ast B = 0$. Thus, $B^2:= B \ast B$ is the smallest ideal of $B$ with a trivial quotient skew brace. Analogously in $B^{\opp}$: $a \ast^{\opp} b = -b + ab -a$ for every $a,b\in B$. Then, $B$ is almost trivial if, and only if, $(B^{\opp})^2 = 0$, and $(B^{\opp})^2$ is the smallest ideal with an almost trivial quotient skew brace. 

A skew brace $B$ is said to be \emph{abelian} if it is trivial and $a+b=b+a$ for every $a,b\in B$.
\begin{Note}
\label{nota:B2-Bopp2}
Every abelian skew brace can be seen also as almost trivial. Thus, if $I$ is an ideal of $B$ such that $B/I$ has prime order, then both $(B^{\opp})^{2},B^{2} \le I$.
\end{Note}

In~\cite{ballesterbolinches2024solubleskewleftbraces}, \emph{soluble} skew braces are defined as those skew braces for which there exists a chain of ideals
\[ I_0 = 0 \subseteq I_1 \subseteq \cdots \subseteq I_n = B\]
such that $I_{k}/I_{k-1}$ is an abelian skew brace for every $1\leq k \leq n$. By \cite[Theorem~B]{ballesterbolinches2024solubleskewleftbraces}, it turns out that a minimal ideal of a soluble skew brace must be an abelian skew brace, with equal groups isomorphic to a $p$-elementary abelian group.

%
%
%
%
%
%
%
%

We end this section with the following key lemma.

\begin{Lemma} \label{SolvSB}
Let $(B,+,\cdot)$ be a soluble skew brace such that the multiplicative group $(B,\cdot)$ is nilpotent. Then, for every prime $p$ dividing $|B|$, there exists a subbrace $H \le B$ such that $|H| = p$.
\end{Lemma}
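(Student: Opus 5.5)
The plan is a minimal-counterexample argument that reduces $B$ to order $pq^k$ with $q\neq p$, then finds an element $x$ with $\lambda_x(x)=x$ and applies Lemma~\ref{easyLemma}.

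\textbf{Reduction.} Let $B$ be a counterexample of minimal order for some prime $p$ dividing $|B|$. Subbraces and quotients of $B$ by ideals inherit solubility, and nilpotency of the multiplicative group. For subbraces I take this as standard; if needed I would restrict the chain of ideals $I_k$ to $S$ and use that $(I_k\cap S)/(I_{k-1}\cap S)$ embeds in the abelian brace $I_k/I_{k-1}$.

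Take a minimal ideal $I$ of $B$. By \cite[Theorem~B]{ballesterbolinches2024solubleskewleftbraces}, $I$ is an abelian skew brace with both groups elementary abelian of order $q^k$. If $q=p$, any additive subgroup of order $p$ of $I$ is a subbrace, since $I$ is trivial with abelian additive group. So $q\neq p$, and $p$ divides $|B/I|$.

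By minimality $B/I$ has a subbrace $S/I$ of order $p$. Then $S$ is a subbrace of $B$ satisfying the hypotheses. If $S\neq B$, minimality gives a subbrace of order $p$ inside $S$, a contradiction. Hence $|B|=pq^k$ and $B/I$ is the trivial brace of order $p$. In particular $B^2\le I$ and $(B^{\opp})^2\le I$ by Remark~\ref{nota:B2-Bopp2}.

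\textbf{Structure of $(B,\cdot)$.} Since $(B,\cdot)$ is nilpotent, $(B,\cdot)=P\times I$ with $P=\langle x\rangle$ of order $p$ central. The goal is some $y\in P\setminus\{0\}$ with $\lambda_y(y)=y$, after which Lemma~\ref{easyLemma} gives the subbrace $\langle y\rangle_+$ of order $p$.

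\textbf{Coprime action.} The $p$-group $\lambda(P)\le \Aut(B,+)$ leaves $I$ invariant. It acts trivially on $(B,+)/I\cong \Z/p$, since $\Aut(\Z/p)$ has order $p-1$. Because $\gcd(|I|,p)=1$, the standard coprime-action lemma says the fixed points $C=C_{(B,+)}(\lambda(P))$ map onto $B/I$. So $C$ contains an element $z\notin I$ of additive order $p$. Moreover $x\ast x=\lambda_x(x)-x\in B^2\le I$, so $\lambda_x$ fixes the coset $x+I$.

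\textbf{Main obstacle: converting $z$ into a multiplicative $p$-element.} The step I expect to be hardest is passing from the additive $p$-element $z\in C$ to an element $y\in P$ with $\lambda_y(y)=y$. I would try the following.
\begin{itemize}
\item Write $z=yb$ multiplicatively with $y\in P$, $b\in I$. Use that $P$ is central in $(B,\cdot)$ and that $\lambda(I)$ acts trivially on $I$, since $I$ is a trivial brace.
\item Using $\lambda_y(z)=z$ and $z=y+\lambda_y(b)$, show that $\lambda_y(b)$ is fixed by $\lambda(P)$.
\item Average over the $p$-group $\lambda(P)$ acting on the abelian $q$-group $I$. By coprimality, $I=C_I(\lambda(P))\oplus[I,\lambda(P)]$. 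The aim is to replace $z$ by an element of $C$ whose $I$-component $b$ is trivial, i.e.\ to force $z\in P$. Then $\lambda_z(z)=z$ and Lemma~\ref{easyLemma} applies.
\end{itemize}
If this fails, the fallback is a counting argument. The map $y\mapsto \lambda_y(y)-y$ from $P$ into the $q$-group $I$, combined with the action of $\lambda(I)$, should be shown to vanish at some nonzero $y$, again exploiting $\gcd(p,q)=1$.
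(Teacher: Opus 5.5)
\textbf{There is a genuine gap.} Your reduction to $|B|=pq^k$ with $(B,\cdot)=P\times(I,\cdot)$ is correct and matches the paper. Your coprime-action step is also legitimate: it only needs $\gcd(|\lambda(P)|,|I|)=1$, and Schur--Zassenhaus conjugacy is available because $I$ is abelian. It replaces the paper's count of $\lambda(P)$-fixed Sylow $p$-subgroups of $(B,+)$, and both give $z\notin I$ of additive order $p$ with $\lambda_u(z)=z$ for all $u\in P$.

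But the proof stops exactly at the decisive step, and none of your proposed finishes works as stated.
\begin{itemize}
\item In the second bullet, $\lambda_u(z)=z$ reads $\lambda_u(y)+\lambda_{uy}(b)=y+\lambda_y(b)$. So deducing that $\lambda_y(b)$ is $\lambda(P)$-fixed already requires $\lambda_u(y)=y$, which is the very thing you want. The step is circular.
\item The decomposition $I=C_I(\lambda(P))\oplus[I,\lambda(P)]$ only lets you modify $z$ additively. That gives no control over its multiplicative $I$-component, so it is no mechanism for producing a nonzero element of $C\cap P$.
\item The fallback counting argument is only a hope, not an argument.
\end{itemize}

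\textbf{The missing idea.} $(B,\cdot)$ is abelian, being $P\times(I,\cdot)$ with $(I,\cdot)$ elementary abelian. This lets the fixed point propagate along multiplicative powers of $z$ rather than additive translates. For $u\in P$ you have $uz=u+z$ and $zu=uz$, so by induction and left distributivity
\[
uz^{k+1}=z(uz^k)=z(u+z^k)=zu-z+z^{k+1}=u+z^{k+1},
\]
that is, $\lambda_u(z^k)=z^k$ for every $k$. Now write $z=yb$ with $y\in P$ and $b\in I$. Since $z\notin I$, $y\neq 0$, and then $z^q=y^qb^q=y^q\in P\setminus\{0\}$. Hence $\lambda_{z^q}(z^q)=z^q$, and Lemma~\ref{easyLemma} gives the trivial subbrace $\langle z^q\rangle_+$ of order $p$. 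This is precisely how the paper concludes, through its two cases $o_{\bigcdot}(x)=p$ and $o_{\bigcdot}(x)=pq$. Without this observation your argument does not reach a subbrace of order $p$.
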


\begin{proof}
Let $B$ be a minimal counterexample and let $p$ be a prime dividing $|B|$ such that there is no subbrace of $B$ of order~$p$. Note that $B$ is not simple; otherwise, by \cite[Corollary~21]{ballesterbolinches2024solubleskewleftbraces}, we would get that $B$ is a trivial skew brace of prime order. 

Let $M$ be a minimal ideal of $B$. We know that $M$ is an abelian skew brace with equal groups isomorphic to a $q$-elementary abelian group with $|M| = q^{n}$ for some prime $q$ and positive integer $n$. Observe that $q\neq p$, since $M$ has a subbrace of order $q$ by Remark~\ref{Note:trivialSemitrivial}. Thus, $p \mid |B/M|$. Since $B/M$ is soluble with nilpotent multiplicative group, by the minimality of $B$, there exists a subbrace $P/M$ of $B/M$ of order~$p$. Again, by the minimality of~$B$, $P = B$ so that $|B| = p \cdot |M| = pq^{n}$, and $B/M$ is an abelian skew brace with equal groups isomorphic to a cyclic group of order~$p$. Since $(B,\cdot)$ is nilpotent, observe that, in particular, $(B,\cdot)$ is abelian.

Let $\langle y \rangle_{\bigcdot}$ be the Sylow $p$-subgroup of $(B,\cdot)$. Consider the action
\[
\phi: \langle y \rangle_{\bigcdot} \rightarrow \operatorname{Sym}(\operatorname{Syl}_{p}(B,+))
\]
defined by $\phi(y^k)(P) = \lambda_{y^k}(P)$, for every $P \in \operatorname{Syl}_{p}(B,+)$ and every $k\in \mathbb{Z}$. Since $\langle y \rangle_{\bigcdot}$ has order $p$, the set of fixed points of the action is congruent with $|\operatorname{Syl}_{p}(B,+)|$ modulo~$p$. Thus, there exists $P \in \operatorname{Syl}_{p}(B,+)$ such that $\lambda_{\langle y \rangle_{\bigcdot}}(P) = P$. Therefore, we can consider the restriction $\psi:= \langle y \rangle_{\bigcdot} \rightarrow \operatorname{Aut}(P,+)$, and hence, $\psi$ must be trivial.

Call $P = \langle x \rangle_{+}$, for some $x\in B \setminus M$.  Since $B/M$ is an abelian skew brace of order $p$, $x^p \in M$. Thus, $p$ divides the multiplicative order $o_{\bigcdot}(x)$, and we must have either $o_{\bigcdot}(x) = p$ or $o_{\bigcdot}(x) = pq$. In the first case, we have $\langle x \rangle_{\bigcdot} = \langle y \rangle_{\bigcdot}$, so that $x = y^k$ for some natural number $k$. Therefore,
\[
\lambda_x(x) = \lambda_{y^k}(x) = \psi(y^k)(x) = x,
\]
which implies, by Lemma~\ref{easyLemma}, that $P = \langle x \rangle_{+}$ is a trivial subbrace of order~$p$.

Suppose instead that $o_{\bigcdot}(x) = pq$. Then $\langle x^q \rangle_{\bigcdot}$ has order $p$, and therefore, $\langle x^q \rangle_{\bigcdot} = \langle y \rangle_{\bigcdot}$. Let $y = x^t$, for some multiple $t$ of~$q$. Since $(B,\cdot)$ is abelian and $\lambda_y(x) = x$, we have $xy = yx = y + x$. Then, it holds that
\begin{align*}
yx^2 &= (yx)x = (y + x)x = x(y + x) = xy - x + x^2 = y + x - x + x^2 = y + x^2.
\end{align*}
Hence, $\lambda_y(x^2) = x^2$. By induction, we obtain that for every natural number $k$, $\lambda_y(x^k) = x^k$. In particular, $\lambda_y(y) = \lambda_y(x^t) = x^t = y$. Hence, by Lemma~\ref{easyLemma}, we conclude that $\langle y \rangle_{+}$ is a trivial subbrace of $B$ of order~$p$, and we arrive to a final contradiction.
\end{proof}

\section{Proof of Theorem~\ref{teo:two-sided}}

Before proving Theorem~\ref{teo:two-sided} we collect some well-known properties of two sided skew braces. We shall use that the class of two-sided skew braces is closed by subbraces and quotients. Recall that skew braces with abelian additive group are known as braces. Braces were
introduced in~\cite{RUMP2007153} as a generalisation of Jacobson radical rings: two-sided braces \(B\) are exactly Jacobson radical rings \((B,+,\ast)\).

A first application of the distributivity law~\eqref{eq:left-dist} on both two-sides yields the following lemma.

\begin{Lemma}
\label{lema:centralsubbrace}
Let $B$ be a two-sided skew brace and let $a\in B$. Then, the multiplicative centraliser $\operatorname{C}_{(B,\bigcdot)}(a)$ is a subbrace of~$B$.
\end{Lemma}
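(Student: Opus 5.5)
Write $C=\operatorname{C}_{(B,\bigcdot)}(a)$. Since $C$ is a subgroup of $(B,\bigcdot)$ and $B$ is finite, it suffices to show that $C$ is closed under addition. Closedness under additive inverses then follows automatically: in a finite group, a nonempty subset closed under the operation is a subgroup. So the whole proof reduces to showing that $b,c\in C$ implies $b+c\in C$.

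To do this, fix $b,c\in C$ and compute $a(b+c)$ and $(b+c)a$ using the two distributive laws.
\begin{itemize}
\item The left law~\eqref{eq:left-dist} gives $a(b+c)=ab-a+ac$.
\item The right law, which holds because $B$ is two-sided, gives $(b+c)a=ba-a+ca$.
\end{itemize}
Since $ab=ba$ and $ac=ca$, the two right-hand sides agree term by term. Hence $a(b+c)=(b+c)a$, that is, $b+c\in C$.

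There is no real obstacle here. The only care needed is to state the right compatibility law in the correct form, $(b+c)a=ba-a+ca$, which is the form also used for~\eqref{eq:a(-b)}. If one prefers not to invoke finiteness, the same approach handles inverses directly: by~\eqref{eq:a(-b)}, $a(-b)=a-ab+a$ and $(-b)a=a-ba+a$, and these coincide when $ab=ba$. Either way, $(C,+)$ is a subgroup of $(B,+)$, and $C$ is a subbrace of $B$.
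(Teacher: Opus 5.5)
Your proof is correct and takes essentially the same route as the paper: both combine the left and right compatibility laws, together with the identities~\eqref{eq:a(-b)} for $a(-b)$ and $(-b)a$, with the relations $ab=ba$ and $ac=ca$. The only difference is cosmetic. The paper checks closure under $b-c$ in a single computation (a one-step subgroup test), whereas you check closure under $+$ and under negation separately, or invoke finiteness for the inverses.
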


\begin{proof}
Let $b,c\in \operatorname{C}_{(B,\bigcdot)}(a)$. By equation~\eqref{eq:a(-b)},
\[ a(b -c) = ab - a + a(-c) = ab -a + a -ac +a = ba -ca +a = ba -a + (-c)a = (b-c)a\]
Thus, $b-c\in \operatorname{C}_{(B,\bigcdot)}(a)$.
\end{proof}

Let \(B\) be a two-sided skew brace. For every $a,b,c,d\in B$, a direct calculation shows that
\begin{align}
\nonumber a(b+c)a^{-1} & = (ab - a + ac)a^{-1} \\
\nonumber &= aba^{-1} -a^{-1} +(-a)a^{-1} - a^{-1} + aca^{-1} \\
 & = aba^{-1} + aca^{-1} \label{eq:int-aut-mult}\\
(a+b)(c+d) & = ac + (b\ast^{\opp}c) + a \ast d + bd; \label{eq:dist1}\\
& =  ac + a \ast d + (b \ast^{\opp} c) + bd \label{eq:dist2};
\end{align}
where in~\eqref{eq:int-aut-mult} we use~\eqref{eq:left-dist} on both sides and~\eqref{eq:a(-b)}; in~\eqref{eq:dist1} we use~\eqref{eq:left-dist} first on the left and then on the right; and vice versa for~\eqref{eq:dist2}. From~\eqref{eq:int-aut-mult}, we obtain that characteristic subgroups of $(B,+)$ are ideals, as they are $\lambda$-invariant and normal in $(B,+)$ and $(B,\cdot)$. On the other hand, equations~\eqref{eq:dist1} and~\eqref{eq:dist2} show that \(B^{2}\) and $(B^{\mathrm{opp}})^2$ centralise each other in \((B,+)\). As a consequence, we obtain the following known lemmas (we include a proof for the sake of completeness).

\begin{Lemma}[{\cite[Corollary 4.8]{Trappeniers_2023}}]
\label{lema:simplicity}
Let $B$ be a simple two-sided skew brace. Then, $B$ is either trivial or almost trivial.
\end{Lemma}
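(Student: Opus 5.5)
Since $B$ is simple, its only ideals are $0$ and $B$. Both $B^2$ and $(B^{\opp})^2$ are ideals of $B$: the first is the smallest ideal with trivial quotient, the second the smallest ideal with almost trivial quotient. If $B^2 = 0$, then $B$ is trivial; if $(B^{\opp})^2 = 0$, then $B$ is almost trivial. So we may suppose, towards a contradiction, that $B^2 = (B^{\opp})^2 = B$.

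Equations~\eqref{eq:dist1} and~\eqref{eq:dist2} are equal for all $a,b,c,d\in B$. Cancelling $ac$ on the left and $bd$ on the right gives
\[ (b\ast^{\opp}c) + (a\ast d) = (a\ast d) + (b\ast^{\opp}c), \]
so every element $a\ast d$ commutes additively with every element $b\ast^{\opp}c$. Because $B^2$ and $(B^{\opp})^2$ are additively generated by these elements, the two subgroups centralise each other in $(B,+)$. Under our assumption both equal $B$, hence $(B,+)$ is abelian.

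With $(B,+)$ abelian we have $a\ast^{\opp}b = -b+ab-a = -a+ab-b = a\ast b$ for all $a,b\in B$. Thus $B^{\opp}=B$ as skew braces, and $B$ is a two-sided brace, i.e.\ a Jacobson radical ring $(B,+,\ast)$ by Rump's correspondence. In that ring, ideals of the brace correspond to ring ideals: $B\ast I$ and $I\ast B$ are contained in $I$, and $I$ is an additive subgroup. A finite Jacobson radical ring is nilpotent, so $B^2 = B\ast B \subsetneq B$ whenever $B\neq 0$. This contradicts $B^2 = B$. (If $B=0$ the statement is vacuous, and simplicity excludes it anyway.)

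The main obstacle is justifying this last step without a heavy appeal to ring theory. A self-contained route uses finiteness directly. The descending chain $B \supseteq B\ast B \supseteq B\ast(B\ast B) \supseteq \cdots$ stabilises at some $N$ with $B\ast N = N$. If $N \neq 0$, a Nakayama-type argument applies: choose $n \in N$ nonzero with $n = a\ast n'$, and use that each $1+a$ is invertible (the multiplicative group exists) to derive a contradiction. Alternatively, if Trappeniers' argument is allowed, one can cite it directly, since the lemma is \cite[Corollary 4.8]{Trappeniers_2023}.
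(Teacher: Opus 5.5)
Your proof is correct and follows the paper's own argument: simplicity forces $B^2=(B^{\opp})^2=B$, comparing \eqref{eq:dist1} with \eqref{eq:dist2} makes $(B,+)$ abelian, and the nilpotency of the finite Jacobson radical ring $(B,+,\ast)$ then contradicts $B\ast B=B$. Your optional Nakayama-type aside is only a sketch, since it would need a minimal generating set of $N$ and an expression of a generator as a sum $\sum_i a_i\ast n_i$ rather than a single product $a\ast n'$, but your main argument does not depend on it.
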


\begin{proof}
By a way of contradiction, assume that $B$ is neither trivial nor almost trivial. Then, the simplicity of $B$ implies that $B^2 = (B^{\opp})^2 = B$. Thus, we have seen that $(B,+)$ must be abelian, and therefore, $(B,+,\ast)$ is a finite Jacobson radical ring. Finite Jacobson radical rings are nilpotent, and therefore, $B^2=  B \ast B$ must be proper in~$B$, so we arrive to a contradiction. 
\end{proof}


   



\begin{Lemma}[{\cite[Lemma~5.4]{Trappeniers_2023}}] \label{Lemma: M*M}
Let $B$ be a two-sided skew brace and take $I = \bigl(B^{2} \cap (B^{\mathrm{opp}})^{2}\bigr)$. Then, $I \ast I$ is an ideal of~$B$.
\end{Lemma}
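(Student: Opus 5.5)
The plan is to first record the structure of $I$ itself, and then check the three conditions for $I\ast I$ separately: $\lambda$-invariance, normality in $(B,+)$, and normality in $(B,\cdot)$.

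\textbf{Structure of $I$.} Since $B^2$ and $(B^{\opp})^2$ are ideals, so is $I=B^2\cap(B^{\opp})^2$. Equations~\eqref{eq:dist1} and~\eqref{eq:dist2} show that $B^2$ and $(B^{\opp})^2$ centralise each other in $(B,+)$. As $I$ lies in both, $(I,+)$ is abelian. Consequently, for $x,y\in I$,
\[
x\ast^{\opp}y=-y+xy-x=-x+xy-y=x\ast y,
\]
because all terms involved lie in $I$, which is closed under products. Hence $I\ast I=I\ast^{\opp}I$ as additive subgroups, and clearly $I\ast I\subseteq I$.

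\textbf{$\lambda$-invariance.} By~\eqref{eq:ast-prod}, $\lambda_a(x\ast y)=(axa^{-1})\ast\lambda_a(y)$. Since $I$ is an ideal, both $axa^{-1}$ and $\lambda_a(y)$ lie in $I$. Additivity of $\lambda_a$ then gives $\lambda_a(I\ast I)\subseteq I\ast I$.

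\textbf{Additive normality.} The key observation is that $b+z-b=\lambda^{\opp}_b(\lambda_b^{-1}(z))$ by~\eqref{eq:lambdaopp}. So it suffices to prove $\lambda^{\opp}$-invariance as well. For this I apply~\eqref{eq:ast-prod} inside the skew brace $B^{\opp}$, which is again two-sided and has the same ideals. This gives $\lambda^{\opp}_a(x\ast^{\opp}y)=(axa^{-1})\ast^{\opp}\lambda^{\opp}_a(y)\in I\ast^{\opp}I=I\ast I$. Hence $I\ast I$ is a normal subgroup of $(B,+)$ that is $\lambda$-invariant. It is then automatically a multiplicative subgroup, since $zw=z+\lambda_z(w)$ and $z^{-1}=\lambda_z^{-1}(-z)$.

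\textbf{Multiplicative normality (main obstacle).} By~\eqref{eq:int-aut-mult}, conjugation $c_a\colon z\mapsto aza^{-1}$ is an automorphism of $(B,+)$, so it suffices to show $c_a(x\ast y)\in I\ast I$. I would treat this by the right-hand mirror of~\eqref{eq:ast-prod}. Define $\rho_a(z)=za^{-1}-a^{-1}$, or equivalently use the right-sided skew brace structure given by two-sidedness. By the left–right symmetric computation, $\rho_a$ is an additive automorphism and satisfies a right analogue of~\eqref{eq:ast-prod} for a suitably defined right star product. Writing $aza^{-1}=(a+\lambda_a(z))a^{-1}=-a^{-1}+\lambda_a(z)a^{-1}$ expresses $c_a$ as a composite of $\lambda_a$ and right multiplication maps.

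What I would try first is to avoid the right star product altogether and instead use~\eqref{eq:int-aut-mult} directly. Since $c_a$ is an additive automorphism that is also multiplicative, expanding $c_a(x\ast y)=c_a(-x+xy-y)$ gives
\[
c_a(x\ast y)=-c_a(x)+c_a(x)c_a(y)-c_a(y)=c_a(x)\ast c_a(y).
\]
Because $I$ is normal in $(B,\cdot)$, this lies in $I\ast I$. Given this identity, the step is actually immediate, so the genuine work is in the additive-normality argument via $\lambda^{\opp}$.
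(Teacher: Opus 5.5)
Your proof is correct. Apart from additive normality, your steps coincide with the paper's: $(I,+)$ is abelian because $B^2$ and $(B^{\opp})^2$ centralise each other, $\lambda$-invariance comes from \eqref{eq:ast-prod}, and multiplicative normality comes from \eqref{eq:int-aut-mult}, giving $c_a(x\ast y)=c_a(x)\ast c_a(y)$. The detour through $\rho_a$ and a right star product is not needed and can be cut.

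The genuine difference is in additive normality. The paper expands $a\ast(-b+c+b)$ using \eqref{eq:dist-sum} and \eqref{eq:a*(-b)}. It then cancels the terms $\pm a\ast b$ and $\pm c$ inside the abelian group $(I,+)$, obtaining $a\ast(-b+c+b)=-b+a\ast c+b$.

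You use the commutativity of $(I,+)$ only once, to see that $\ast$ and $\ast^{\opp}$ agree on $I\times I$, so $I\ast I=I\ast^{\opp}I$. Then $\lambda^{\opp}$-invariance is just \eqref{eq:ast-prod} read in $B^{\opp}$. Two-sidedness of $B^{\opp}$ plays no role there, since \eqref{eq:ast-prod} holds in any skew brace. Finally, additive conjugation by $b$ factors as $\lambda^{\opp}_b\lambda_{b^{-1}}$ by \eqref{eq:lambdaopp}.

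Your route is more conceptual: additive normality becomes the mirror image of $\lambda$-invariance under $B\mapsto B^{\opp}$, and no expansion identities are needed. The paper's route is a short, self-contained computation. The two produce the same identity. Following a generator through your factorisation, $\lambda_{b^{-1}}(x\ast y)=(b^{-1}xb)\ast\lambda_{b^{-1}}(y)$, and applying $\lambda^{\opp}_b$ gives $x\ast(b+y-b)$. This is the paper's formula with $b$ replaced by $-b$.

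You also check explicitly that $I\ast I$ is a multiplicative subgroup, which the paper leaves implicit.
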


\begin{proof}
Observe that $I$ is an ideal with abelian additive group $(I,+)$, as $B^2$ and $(B^{\opp})^2$ centralise each other in $(B,+)$. By ~\eqref{eq:ast-prod}, it follows that $B \ast (I\ast I) \subseteq I\ast I$, i.e. $I$ is $\lambda$-invariant.  Now, let $b\in B$ and let $a\ast c \in I\ast I$ with $a,c\in I$. Applying equations~\eqref{eq:a*(-b)} and~\eqref{eq:dist-sum}, observe that
\[
\begin{aligned}
a\ast(-b+c+b)
&= a\ast(-b) - b + a\ast(c+b) + b\\
&= -b - a\ast b + a\ast c + c + a\ast b - c + b\\
&= -b + a\ast c + b \in I\ast I.
\end{aligned}
\]
In the last equality we have used that $c,a\ast c, a\ast b\in I$, and therefore, they additively commute with each other. Hence, $(I,+)$ is a normal subgroup of $(B,+)$.

On the other hand, by~\eqref{eq:int-aut-mult}, it also holds 
\[
\begin{aligned}
b^{-1}(a*c)b
&= b(-a+ac-c)b^{-1}\\
&= b(-a)b^{-1} + b(ac)b^{-1} + b(-c)b^{-1}\\
&= (bab^{-1}) \ast (bcb^{-1}) \in I\ast I.
\end{aligned}
\]
Thus, $(I\ast I,\cdot)$ is normal in $(B,\cdot)$.
\end{proof}

%

\begin{proof}[Proof of Theorem~\ref{teo:two-sided}]
Let $B$ be a counterexample of minimal order, and let $p$ be a prime divisor of $|B|$ such that $B$ has no subbrace of order~$p$. 

Remark~\ref{Note:trivialSemitrivial} shows that $B$ is neither a trivial nor an almost trivial skew brace. Thus, Lemma~\ref{lema:simplicity}, $B$ can not be simple. 

Let $M$ be a minimal ideal of $B$. By the minimality of~$B$, we may assume that $p \nmid |M|$. Arguing as in the proof of Lemma~\ref{SolvSB}, we must have that $|B| = p|M|$. Then, $B/M$ is an abelian skew brace of order~$p$. Thus, the derived subgroup $(B,+)' \le M$. Since the derived subgroup is characteristic, then it is an ideal,  and by minimality of $M$, we have two possibilities: either $(B,+)' = 0$ or $(B,+)' = M$. If $(B,+)' = 0$, then $(B,+)$ is abelian, and by Lemma~\ref{nilptype} $B$ would contain a subbrace of order $p$, a contradiction. Hence, $(B,+)' = M$. Moreover, the minimality of $M$ implies that $(M,+)$  is a characteristically simple group. Thus, there exist a simple group $S$ and a positive integer $n$ such that $(M,+)$ is isomorphic to~$S^{n}$.

On the other hand, by Remark~\ref{nota:B2-Bopp2}, it holds that $B^{2}, (B^{\opp})^2 \le M$, as $B/M$ is of prime order. Thus, $B^2 = (B^{\opp})^2 = M$, as $B$ is neither trivial nor almost trivial. Since $B^2$ and $B^{\opp}$ centralises each other in $(B,+)$, it follows that $(M,+)$ is abelian. Hence, $S$ is isomorphic to a cyclic group of order prime~$q$, and $M$ is $q$-elementary abelian ---observe that $q\neq p$, otherwise we arrive to a contradiction. Moreover, it follows from Lemma~\ref{Lemma: M*M}  that $M^{2}$ is an ideal of~$B$, which is strictly contained in $M$ as $M$ can be seen as a Jacobson radical ring. Therefore, it follows that $M^2 = 0$, i.e. it is an abelian skew brace. Hence, $B$ is a soluble skew brace.

Recall that the additive group $(B,+)$ is a semidirect product of $(M,+)$ and a cyclic group of order $p$. Moreover, the centre $\operatorname{Z}(B,+)$ is an ideal of~$B$ as it is a characteristic subgroup of $(B,+)$. If $p$ divides $|\operatorname{Z}(B,+)|$, then $(B,+)$ is abelian and, by Lemma~\ref{nilptype}, we arrive to a contradiction. Therefore, $\operatorname{Z}(B,+) \leq M$. Thus, either $\operatorname{Z}(B,+) = M$ or $\operatorname{Z}(B,+) = 0$. The former case cannot occur, since it would imply that $(B,+)$ is abelian. Hence, $\operatorname{Z}(B,+) = 0$.

On the other hand, it also holds that $(B,\cdot)$ is a semidirect product of $(M,\cdot)$ and a cyclic group of order~$p$. We write $(B,\cdot) = M\langle y\rangle_{\bigcdot}$ for some $y\in B$ with multiplicative order $o_{\bigcdot}(y) = p$. We claim that the order of $y$ in the additive group $(B,+)$ is also~$p$. Indeed, the quotient $B/M$ is trivial of order $p$, and therefore, $p = o_{\bigcdot}(yM) = o_{+}(y+M)$, which implies that $p$ divides $o_{+}(y)$. If a prime $q$ divides $o_{+}(y)$, then $0\neq ry \in M$ for positive integer $r$, and therefore, $\operatorname{Z}(B,+)\neq 0$, which is not possible. Hence, $o_+(y) = p$. Consequently, $B = M + \langle y \rangle_{\bigcdot} = M + \langle y \rangle_+$.

Our next step is to prove that $\operatorname{Z}(B,\cdot)$ is an ideal of~$B$. Thus, we need to verify that $\operatorname{Z}(B,\cdot)$ is $\lambda$-invariant and $\operatorname{Z}(B,\cdot)\trianglelefteq (B,+)$. If $p$ divides $|\operatorname{Z}(B,\cdot)|$, then $(B,\cdot)$ is abelian, and we arrive to a contradiction by Lemma~\ref{SolvSB}. Therefore, $\operatorname{Z}(B,\cdot) \le M$.

Let $b\in B$ and $a\in \operatorname{Z}(B,\cdot)$. Since $\operatorname{Z}(B,\cdot) \le M$, $\lambda_b(a) \in M$, which is abelian, and therefore, it suffices to show that $\lambda_b(a)$ commutes with~$y$. We write $b = m+ ky$ for some $m\in M$ and $k\in \Z$. Thus, we see that
\[
\lambda_b(a)=-b+ba= -(m+ky)+(m+ky)a
= -ky -m+ma-a+ (ky)a.
\]
Since $a,m\in M$, we have $ma=m+a$, and therefore, 
\[\lambda_b(a)= -ky +(ky)a \in \operatorname{C}_{(B,\bigcdot)}(y),\]
which is a subbrace by Lemma~\ref{lema:centralsubbrace}.

We now prove that $\operatorname{Z}(B,\cdot)\trianglelefteq (B,+)$. Let $a\in \operatorname{Z}(B,\cdot)$ and $b\in B$, and write $b = m + ky$ for some $k\in \mathbb{Z}$. Since $M$ is an abelian ideal and applying Lemma~\ref{lema:centralsubbrace}, 
\[ -b+a+b = -ky+a+ky\in M \cap \operatorname{C}_{(B,\bigcdot)}(y).\]
Thus, $-b+a+b\in \operatorname{Z}(B,\cdot)$. Therefore, $\operatorname{Z}(B,\cdot)$ is an ideal of~$B$ contained in~$M$. Thus, either $\operatorname{Z}(B,\cdot)=0$ or $\operatorname{Z}(B,\cdot)=M$. The case $\operatorname{Z}(B,\cdot)=M$ would imply that $(B,\cdot)$ is abelian and we arrive to a contradiction applying Lemma~\ref{SolvSB}. Hence, $\operatorname{Z}(B,\cdot)=0$.

Finally, observe that $p$ divides the order of the subbrace $\operatorname{C}_{(B,\bigcdot)}(y)$. By the minimality of $B$, $\operatorname{C}_{(B,\bigcdot)}(y) = B$ which contradicts the fact that $\operatorname{Z}(B,\cdot) = 0$. Hence, this final contradiction proves the theorem.
\end{proof}

\section{Proof of Theorem~\ref{teo:bi-skew}}
We start by recalling some well-known facts on bi-skew braces (see~\cite{biSkewbrace}, for example). If $(B,+,\cdot)$ is a skew brace $(B,+,\cdot)$, then $(B,\cdot,+)$ is again a skew brace. If we denote $\gamma$ the lambda-map of $(B,\cdot,+)$, then for every $a,b\in B$, we have
\[
\gamma_{a^{-1}}(b)=a(a^{-1}+b)=-a+ab=\lambda_a(b).
\]
Thus, it also holds $\lambda_a\in\mathrm{Aut}(B,\cdot)$. Since $a+b = \lambda_{a^{-1}}(b)$ for every $a,b\in B$, it also holds that for every $b,c\in B$,
\[
\lambda_a(b)\lambda_a(c)
=\lambda_a(bc)
=\lambda_a\bigl(b+\lambda_b(c)\bigr)
=\lambda_a(b)+\lambda_a\lambda_b(c)
=\lambda_a(b)
\lambda_{\lambda_a(b)^{-1}}\,\lambda_a\lambda_b(c),
\]
which yields $\lambda_a(c) = \lambda_{\lambda_a(b)^{-1}ab}(c)$. Therefore, $\lambda_{\lambda_a(b)a}=\lambda_{ab}$ for every $a,b\in B$. Replacing $b$ by $\lambda_a^{-1}(b)$ gives
\[
\lambda_{ba}
=\lambda_{\lambda_a(\lambda_a^{-1}(b))a}
=\lambda_{a\lambda_a^{-1}(b)}
=\lambda_{a+b}.
\]
Hence, the $\lambda$-map in $(B,+,\cdot)$ can be seen as a skew brace homomorphism (it~conserves sums and products) between $(B,+,\cdot)$ and the almost trivial skew brace $(\mathrm{Aut}(B,+),\circ^{\mathrm{opp}},\circ)$, where $\circ$ denotes composition in $\Aut(B,+)$. As a consequence, $\ker \lambda$ is an ideal of $B$ with quotient an almost trivial skew brace. Hence, $(B^{\opp})^{2} \le \ker \lambda$.


\begin{Lemma} \label{simpleBiSkewBrace}
 Let $B$ be a simple bi-skew brace. Then $B$ is either trivial or almost trivial.   
\end{Lemma}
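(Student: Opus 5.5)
The plan is to split according to the ideal $\ker\lambda$. The computation just before the statement shows that, in a bi-skew brace, $\lambda$ is a skew brace homomorphism from $(B,+,\cdot)$ to the almost trivial skew brace $(\Aut(B,+),\circ^{\opp},\circ)$. It follows that $\ker\lambda$ is an ideal of $B$ and that $(B^{\opp})^2\le\ker\lambda$. Since $B$ is simple, only two cases can occur: $\ker\lambda=B$ or $\ker\lambda=0$.

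If $\ker\lambda=B$, then $\lambda_a=\mathrm{id}$ for every $a\in B$. This means $ab=a+\lambda_a(b)=a+b$ for all $a,b$, so $B$ is trivial.

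If $\ker\lambda=0$, then $(B^{\opp})^2\le\ker\lambda=0$. By the characterisation recalled in Section~\ref{sec:prelim}, $(B^{\opp})^2=0$ is equivalent to $B$ being almost trivial. As a sanity check, the same conclusion follows directly. In this case $\lambda$ is injective, so $B$ is isomorphic to its image. That image is a subbrace of the almost trivial brace $(\Aut(B,+),\circ^{\opp},\circ)$. Any subbrace of an almost trivial brace is again almost trivial, because the identity $ab=b+a$ is inherited by subsets closed under both operations.

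The argument is therefore a direct dichotomy with no real obstacle. The only substantive input is that $\ker\lambda$ is an ideal containing $(B^{\opp})^2$, which rests on the identity $\lambda_{a+b}=\lambda_{ba}$ for bi-skew braces. That identity was established in the text immediately preceding the statement, so I would simply invoke it. If I wanted to be fully careful, I would additionally check that $\ker\lambda$ is normal in $(B,+)$ and $\lambda$-invariant. Both follow from $\lambda$ being a homomorphism for both operations: the kernel is normal in both groups. For $\lambda$-invariance, take $k\in\ker\lambda$ and $a\in B$. Then $\lambda_a(k)=-a+ak$, and $\lambda_{-a+ak}$ is computed from $\lambda_{-a}\circ^{\opp}\lambda_{ak}$, which equals the identity since $\lambda_{ak}=\lambda_a$.
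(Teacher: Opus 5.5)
Your proof is correct and is essentially the paper's argument: both rest on the inclusion $(B^{\opp})^2\le\ker\lambda$ between ideals, established just before the lemma. The paper applies simplicity to $(B^{\opp})^2$ (if $B$ is not almost trivial then $(B^{\opp})^2=B$, so $\lambda$ is trivial and $B\ast B=0$), whereas you apply it to $\ker\lambda$; this is the same dichotomy read from the other end of that inclusion, and your embedding of $B$ into $(\Aut(B,+),\circ^{\opp},\circ)$ in the injective case is a valid but optional extra check.
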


\begin{proof}
   Suppose that $B$ is not almost trivial. Take $a \in (B^{\opp})^{2}$ and $b \in B$. It follows that $a*b=-a+ab-b=\lambda_{a}(b)-b=0$, as $(B^{\opp})^{2} \le \ker \lambda$. Thus $(B^{\opp})^{2}*B=0$. Since $B$ is simple and $B$ is not almost trivial we get $B*B=0$, i.e $B$ is a trivial skew brace.
\end{proof}

\begin{proof}[Proof of Theorem \ref{teo:bi-skew}]
Let $B$ be a minimal counterexample and take $p$ a prime dividing $|B|$ such that $B$ has no subbrace of order~$p$. Observe that $B$ is neither simple nor trivial nor almost trivial, by Lemma~\ref{simpleBiSkewBrace} and Remark~\ref{Note:trivialSemitrivial}. 

Let $M$ be a minimal ideal of $B$. Arguing as in Lemma~\ref{SolvSB}, we conclude that $|B| = p|M|$ and $p$ does not divide~$|M|$. In particular, we have that the quotient $B/M$ is an abelian skew brace isomorphic to a cyclic group of order~$p$. It follows that $(B^{\mathrm{opp}})^2 \leq M$, and by the minimality of~$M$, we obtain $(B^{\mathrm{opp}})^2 = M$ as $B$ is not almost trivial. Moreover, it also holds that $M = (B^{\mathrm{opp}})^2 \leq \ker(\lambda)$. 

Observe that $\ker \lambda$ is a subbrace of~$B$. Then, either $\ker \lambda = B$ or $\ker \lambda = M$. The former case is not possible, as it would imply that $B$ is trivial. Thus, we must have $M = \ker(\lambda)$.

Take $P \in \mathrm{Syl}_p(B,+)$. For every $m\in M$ and every $b\in B$, we have that $\lambda_{bm}(P) = \lambda_b(P) \in \mathrm{Syl}_p(B,+)$. Therefore, the action
\[
\Phi \colon B/M \longrightarrow \mathrm{Sym}\bigl(\mathrm{Syl}_p(B,+)\bigr), 
\qquad
bM \longmapsto \bigl(P \mapsto \lambda_b(P)\bigr),
\]
is well defined. Since $\bigl|\mathrm{Syl}_p(B,+)\bigr|$ divides $|M|$ and $p$ does not divide $|M|$ there exists $P \in \mathrm{Syl}_p(B,+)$ such that $\lambda_b(P) = P$, for all $b \in B$. In particular, $P$ is a subbrace of~$B$ of order~$p$, and we arrive to a final contradiction.
\end{proof}

\section*{Acknowledgements}
The authors would like to thank Andrea Loi for several helpful comments and suggestions which improved the presentation of this work. The first author is a member of the INdAM–GNSAGA research group. The second author is supported by the Spanish government: Ministerio de Ciencia, Innovación y Universidades, Agencia Estatal, FEDER (grant PID2024-159495NB-I00), and the Conselleria d'Educació, Universitats i Ocupació, Generalitat Valenciana (grant: \mbox{CIAICO/2023/007}).

\bibliographystyle{plain}
\bibliography{refrenze}

@article{Acri_2020,
   title={Skew braces of size {$pq$}},
   volume={48},
   ISSN={1872--1881},
   url={http://dx.doi.org/10.1080/00927872.2019.1709480},
   DOI={10.1080/00927872.2019.1709480},
   number={5},
   journal={Comm. Algebra},
   publisher={Informa UK Limited},
   author={E. Acri and M. Bonatto},
   year={2020}, 
   pages={1872–1881} }

@misc{dededind-skew-brace25,
      title={On {D}edekind {S}kew {B}races}, 
      author={A. Caranti and I. Del Corso and M. Di Matteo and M. Ferrara and M. Trombetti},
      year={2025},
      note = {Preprint: arXiv 2507.23550},
      eprint={2507.23550},
      archivePrefix={arXiv}, 
}

@Article{AcriBonatto20-algcoll-ab,
  author = 	 {E. Acri and M. Bonatto},
  title = 	 {Skew braces of order {$p^2q$} {I}: Abelian type},
  journal = 	 {Alg. Colloq.},
  year = 	 2020,
  volume = 	 29,
  number = 	 2,
  pages = 	 {297--320},
  doi =          {10.1142/S1005386722000244}
}

@Article{AcriBonatto22-jaa-nonab,
  author = 	 {E. Acri and M. Bonatto},
  title = 	 {Skew braces of size {$p^2q$} {II}: {N}on-abelian type},
  journal = 	 {J. Algebra Appl.},
  volume = 21,
  number = 3,
  year = 	 2022,
  pages = {2250062},
  doi =          {10.1142/S0219498822500621}}

@article{ballesterbolinches2024solubleskewleftbraces,
      title={Soluble skew left braces and soluble solutions of the {Y}ang-{B}axter equation}, 
      author={A. Ballester-Bolinches and R. Esteban-Romero and P. Jiménez-Seral and V. Pérez-Calabuig},
      year={2024},
      journal={Adv. Math.},
      volume={455},
      pages = {109880}, 
}

@article{BallesterEstebanKurdachenkoPerezC-25-actions,
      title={From actions from an abelian group on itself to left braces},
      author={A. Ballester-Bolinches and R. Esteban-Romero and L.A. Kurdachenko and V. Pérez-Calabuig},
      year=2025,
      journal = {Math.\ Proc.\ Camb.\  Philos.},
      pages = {65--79},
      volume = 178,
      number = 1,
}

@article{byott2015,
      title={Solubility Criteria for {H}opf-{G}alois Structures}, 
      author={N. P. Byott},
      year={2015},
      journal={New York J. Math.},
      volume={21},
      pages={883-903},
}

@article{Byott24-insoluble,
    author = {N. P. Byott},
    title = {On insoluble transitive subgroups in the holomorph of a finite soluble group},
    journal = {J. Algebra},
    year = 2024,
    volume = 638,
    pages = {1--31},
}

@article{DameleLoi25,
    author = {M. Damele and A. Loi},
    title = {Structural and rigidity properties of {L}ie skew braces}, 
    journal = {J. Algebra},
    note = {In press.},
    year = {2025}
}

@article{childs19,
      title={Bi-skew braces and {H}opf {G}alois structures}, 
      author={L. N. Childs},
      year={2019},
      journal={New York J. Math.},
      volume={25},
      pages={574--588},
}

@book{Kourovka26,
  address =	 {Novosibirsk, Russia},
  edition =	 21,
  editor =	 {V. D. Mazurov and E. I. Khukhro},
  publisher =	 {Russian Academy of Sciences, Siberian Branch,
                  Institute of Mathematics},
  title =	 {Unsolved problems in Group Theory: The {K}ourovka
                  Notebook},
  year =	 2026
}

@article{nasybullov2019,
      title={Connections between properties of the additive and the multiplicative groups of a two-sided skew brace}, 
      author={T. Nasybullov},
      year={2019},
      journal={J. Algebra},
      volume = 540,
      pages = {156--167},
}

@article{Tsang_2019,
   title={On the solvability of regular subgroups in the holomorph of a finite solvable group},
   volume={30},
   ISSN={1793-6500},
   url={http://dx.doi.org/10.1142/S0218196719500735},
   DOI={10.1142/s0218196719500735},
   number={2},
   journal={Int. J. Algebra Comput.},
   publisher={World Scientific Pub Co Pte Lt},
   author={C. Tsang and C. Qin},
   year={2019}, 
   pages={253–265} }

@article{Trappeniers_2023,
   title={On two-sided skew braces},
   volume={631},
   ISSN={0021-8693},
   url={http://dx.doi.org/10.1016/j.jalgebra.2023.05.003},
   DOI={10.1016/j.jalgebra.2023.05.003},
   journal={J. Algebra},
   publisher={Elsevier BV},
   author={Trappeniers, S.},
   year={2023},
   pages={267–286} }

@article{Rump20,
  author = {W. Rump},
  title = {One-generator braces and indecomposable set-theoretic solutions to the {Y}ang-{B}axter equation},
  journal = {Proc. Edinburgh Math. Soc.},
  volume = 63,
  year = 2020,
  pages = {676--696},
}

@article{RUMP2007153,
title = {Braces, radical rings, and the quantum Yang–Baxter equation},
journal = {J. Algebra},
volume = {307},
number = {1},
pages = {153-170},
year = {2007},
issn = {0021-8693},
doi = {https://doi.org/10.1016/j.jalgebra.2006.03.040},
url = {https://www.sciencedirect.com/science/article/pii/S0021869306002626},
author = {W. Rump}
}

@article{koch2020oppositeskewleftbraces,
      title={Opposite skew left braces and applications}, 
      author={Alan Koch and Paul J. Truman},
      year={2020},
      journal={J. Algebra},
      volume = 546,
      pages = {218--235},
}

@article{biSkewbrace,
   title={On bi-skew braces and brace blocks},
   volume={227},
   ISSN={0022-4049},
   url={http://dx.doi.org/10.1016/j.jpaa.2022.107295},
   DOI={10.1016/j.jpaa.2022.107295},
   number={5},
   journal={J.\ Pure\ Appl.\ Algebra},
   publisher={Elsevier BV},
   author={L. Stefanello and S. Trappeniers},
   year={2023}, 
   pages={107295} }

\end{document}